\theoremstyle{plain}
\newtheorem{theorem}{Theorem}
\newtheorem{lemma}[theorem]{Lemma}
\newtheorem{corollary}[theorem]{Corollary}
\theoremstyle{definition}
\newtheorem{definition}[theorem]{Definition}
\newtheorem{example}[theorem]{Example}
\newtheorem{conjecture}[theorem]{Conjecture}
\theoremstyle{remark}
\newtheorem{remark}[theorem]{Remark}
\title{Counterexamples to conjectures by Gross, Mansour and Tucker on partial-dual genus polynomials of ribbon graphs}
\author{Qi Yan~~~Xian'an Jin\footnote{Corresponding author.}\\
\small School of Mathematical Sciences\\[-0.8ex]
\small Xiamen University\\[-0.8ex]
\small P. R. China\\
\small\tt Email:qiyanmath@163.com;xajin@xmu.edu.cn}
\date{}
\begin{document}

\begin{abstract}
Gross, Mansour and Tucker introduced the partial-dual orientable genus polynomial and the partial-dual Euler genus polynomial. They computed these two partial-dual genus polynomials of four families of ribbon graphs, posed some research problems and made some conjectures. In this paper, we introduce the notion of signed sequences of bouquets and obtain the partial-dual Euler genus polynomials for all ribbon graphs with the number of edges less than 4 and
the partial-dual orientable genus polynomials for all orientable ribbon graphs with the number of edges less than 5 in terms of signed sequences. We check all the conjectures and find a counterexample to the Conjecture 3.1 in their paper: There is no orientable ribbon graph having a non-constant partial-dual genus polynomial with only one non-zero coefficient. Motivated by this counterexample, we further find an infinite family of counterexamples to the conjecture. Moreover, we find a counterexample to the Conjecture 5.3 in their paper: The partial-dual Euler-genus polynomial for any non-orientable ribbon graph is interpolating.
\end{abstract}
\begin{keyword}
Ribbon graph\sep partial dual\sep genus polynomial\sep signed sequence
\vskip0.2cm
\MSC [2020] 05C10, 05C30, 05C31, 57M15
\end{keyword}
\maketitle

\section{Introduction}
It is well known that for any ribbon graph \cite{BR2} $G$, equivalently, the old cellularly embedded graph, there is a natural geometric dual ribbon graph $G^{*}$.
In \cite{CG}, Chmutov introduced an extension of geometric duality called partial duality.
Loosely speaking, a partial dual is obtained by forming the geometric dual of a ribbon graph only at a subset of its edges.
Partial duality was used to unify various relations between the Jones polynomials of (virtual) knots and the topological Tutte polynomials of ribbon graphs \cite{Chmu, Chmu2, Lin1}.

In \cite{GMT} Gross, Mansour and Tucker introduced the partial-dual orientable genus polynomials for orientable ribbon graphs and the partial-dual Euler genus polynomials for arbitrary ribbon graphs.

\begin{definition}\label{def-1}\cite{GMT}
The \emph{partial-dual Euler-genus polynomial} (abbr. pDe-polynomial) of any ribbon graph $G$ is the generating function
$$^{\partial}\varepsilon_{G}(z)=\sum_{A\subseteq E(G)}z^{\varepsilon(G^{A})}$$
that enumerates partial duals by Euler-genus.
The \emph{partial-dual orientable genus polynomial} (abbr. pDg-polynomial) of an orientable ribbon graph $G$ is the generating function
$$^{\partial}\Gamma_{G}(z)=\sum_{A\subseteq E(G)}z^{\gamma(G^{A})}$$
that enumerates partial duals by orientable genus.
\end{definition}
Clearly, if $G$ is an orientable ribbon graph, then $^{\partial}\Gamma_{G}(z)=~^{\partial}\varepsilon_{G}(z^{\frac{1}{2}})$.

They gave an edge-contraction/edge-deletion recurrence equation for the subdivision of an edge. This subdivision recursion was then used to
derive closed formulas for the partial-dual orientable genus polynomials of four
families of ribbon graphs. They also posed some research problems and made some conjectures. Conjecture 3.1 and Conjecture 5.3 in their paper state that
\begin{conjecture}\label{con-1}\cite{GMT}
There is no orientable ribbon graph having a non-constant partial-dual genus polynomial with only one non-zero coefficient.
\end{conjecture}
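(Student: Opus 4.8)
The plan is to argue by contradiction. Suppose $G$ is an orientable ribbon graph whose pDg-polynomial is non-constant but has a single non-zero coefficient, so that $^{\partial}\Gamma_{G}(z)=c\,z^{k}$ with $k\ge 1$ and necessarily $c=2^{|E(G)|}$. Equivalently, $\gamma(G^{A})=k$ for every $A\subseteq E(G)$: the whole orbit of $G$ under partial duality is pinned to one fixed positive orientable genus. The entire proof would then reduce to showing that such rigidity is impossible, i.e. that a positive common genus can always be broken by a suitable choice of $A$.

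First I would reduce to a one-vertex ribbon graph. Partial duality distributes over disjoint union and genus is additive over connected components, so $^{\partial}\Gamma_{G}$ factors over the components of $G$; a product of polynomials is a single monomial only if each factor is, so it suffices to treat connected $G$. For connected $G$, choose a spanning tree $T$ and pass to the partial dual $B:=G^{T}$, which is a bouquet. Using the composition law $(G^{T})^{A'}=G^{T\triangle A'}$, the partial duals of $B$ are exactly the partial duals of $G$, so the hypothesis transfers verbatim: $\gamma(B^{A'})=k$ for every $A'\subseteq E(G)$. This is precisely the setting where the signed-sequence-of-bouquets machinery applies, letting me read off $\gamma(B^{A'})$ from the chord/interlacement data of $B$.

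Next I would combine the extreme subsets with a local genus-lowering step. The endpoints give $\gamma(B)=\gamma(B^{\emptyset})=k$ and $\gamma(B^{E})=\gamma(B^{*})=k$, consistent with geometric duality preserving orientable genus. The decisive step is local: I would try to produce a single loop $e$ of $B$ (or of one of its partial duals) whose partial dual strictly decreases the genus, for instance a loop not interleaved with the remaining loops, or one that can be made non-interleaved after dualizing a few edges, so that $\gamma(B^{\{e\}})<\gamma(B)$. Exhibiting such an $e$ whenever $k\ge 1$ would contradict the assumed constancy and finish the argument.

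The hard part is exactly this last step, and I am frankly doubtful it can be carried out. The claim that a positive common genus can always be broken is equivalent to asserting that $\min_{A}\gamma(G^{A})=0$ for every orientable $G$, i.e. that every orientable ribbon graph has a planar partial dual, and there is no evident reason for that to hold. If instead one can find a single bouquet all of whose partial duals sit at the same positive genus, then the constancy survives and the statement is false rather than true. I would therefore regard the search for a genus-lowering loop as the make-or-break obstacle, and treat repeated failure to find one as a strong signal to look for a counterexample among small bouquets, where the signed-sequence computations for up to four edges can be scanned exhaustively.
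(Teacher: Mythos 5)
You flagged the make-or-break step yourself, and it is exactly where the argument dies: the statement is false, and the paper's treatment of it is a refutation, not a proof. The concrete obstruction is the bouquet $B=\Theta_{3}$ with signed rotation $(1,2,3,1,2,3)$, i.e.\ three pairwise interlaced untwisted loops (signed sequence $(2,2,2)$). For a bouquet, Lemma \ref{lem-01} gives $\gamma(B^{A})=\gamma(A)+\gamma(A^{c})$ for every $A\subseteq E(B)$, and every subset of the loops of $\Theta_{t}$ again consists of pairwise interlaced loops, so Lemma \ref{lem-03} applies: $\gamma(\Theta_{s})=\frac{1}{2}(s-1)$ for odd $s$ and $\frac{1}{2}s$ for even $s$. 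With $t=3$, exactly one of $|A|$, $|A^{c}|$ is odd, and all eight subsets give $\gamma(B^{A})=1$, whence $^{\partial}\Gamma_{B}(z)=8z$, a non-constant monomial. Thus no loop of any partial dual of $B$ lowers the genus, and the principle your proof needed --- which, as you correctly observed, is equivalent to $\min_{A}\gamma(G^{A})=0$ for every orientable $G$, i.e.\ every orientable ribbon graph having a planar partial dual --- fails outright: here $\min_{A}\gamma(B^{A})=1$.

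That said, everything in your proposal before the decisive step is sound and matches the paper's toolkit: reduction to bouquets via the partial dual $G^{T}$ at a spanning tree, invariance of the polynomial under partial duality (Lemma \ref{pro-2}), and multiplicativity over joins (Lemma \ref{lem-02}). Your closing instinct --- to treat repeated failure to find a genus-lowering loop as a signal and to scan small bouquets by signed sequences --- is precisely how the counterexample surfaces in the paper: it appears in Table \ref{tab-2} as the prime bouquet with signed sequence $(2,2,2)$ and $^{\partial}\Gamma_{\Theta}(z)=8z$. Theorem \ref{th-01} then extends it to the infinite family $\Theta_{t}=(1,2,\ldots,t,1,2,\ldots,t)$ with $t$ odd, where the parity argument above yields $^{\partial}\Gamma_{\Theta_{t}}(z)=2^{t}z^{\frac{1}{2}(t-1)}$. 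So the correct resolution of the statement is a disproof by counterexample, and your proof strategy, while well organized, could never have closed.
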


\begin{conjecture}\label{con-2}\cite{GMT}(Interpolating).
The partial-dual Euler-genus polynomial $^{\partial}\varepsilon_{G}(z)$ for any non-orientable ribbon graph is interpolating.
\end{conjecture}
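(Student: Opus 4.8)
The plan is to \emph{refute} Conjecture~\ref{con-2} rather than to prove it, by exhibiting a single non-orientable ribbon graph whose pDe-polynomial is not interpolating. Recall that a polynomial $\sum_i a_i z^i$ is interpolating exactly when the exponents carrying nonzero coefficients form a block of consecutive integers. Hence it suffices to produce a non-orientable $G$ together with integers $a<b<c$ such that some $A\subseteq E(G)$ gives $\varepsilon(G^A)=a$, some gives $\varepsilon(G^A)=c$, but no subset gives $\varepsilon(G^A)=b$, leaving a gap in the support of $^{\partial}\varepsilon_{G}(z)$.

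First I would reduce the entire search to bouquets. The pDe-polynomial is a partial-dual invariant: since $(G^B)^A=G^{A\triangle B}$ and the map $A\mapsto A\triangle B$ permutes the subsets of $E(G)$, one obtains $^{\partial}\varepsilon_{G^B}(z)={}^{\partial}\varepsilon_{G}(z)$ for every $B\subseteq E(G)$. Because every connected ribbon graph is a partial dual of a one-vertex ribbon graph (partial duality along a spanning tree collapses the spanning ribbon tree, a disk with one boundary circle, to a single vertex), it is enough to test signed sequences of bouquets — precisely the objects introduced in the paper. This renders the candidate space finite and explicitly enumerable for each fixed number of edges.

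Since the paper's computations already cover all ribbon graphs with fewer than four edges, I would seek the counterexample among non-orientable bouquets with three edges. For such a bouquet there are only $2^3=8$ subsets $A$, and the Euler-genus of each partial dual follows from the formula $\varepsilon=e+1-f$ for one-vertex ribbon graphs, where $f$ is the number of boundary components; thus computing $\varepsilon(G^A)$ reduces to tracing the boundary curves of the signed chord diagram for $G^A$ and counting them. Moreover, orientability is itself a partial-dual invariant — the very fact underlying the identity $^{\partial}\Gamma_{G}(z)={}^{\partial}\varepsilon_{G}(z^{\frac{1}{2}})$ recorded above for orientable $G$ — so a non-orientable $G$ has only non-orientable partial duals, whence $\varepsilon(G^A)\ge 1$ for every $A$ and the only possible exponents are $1,2,3$. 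A gap therefore amounts to the exponents $1$ and $3$ both occurring while $2$ is absent, giving a polynomial of shape $az+bz^{3}$ with $a,b>0$ and $a+b=8$.

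The main obstacle is the \emph{existence} of the gap, not any individual computation: for most signed sequences the boundary-component counts fill out a consecutive range and the conjecture holds, so the real work lies in isolating a twist-and-interlacement pattern whose partial duals accumulate at Euler-genus $1$ and $3$ while systematically avoiding $2$. I expect this to require either a short exhaustive pass over the three-edge signed bouquets or a structural argument pinning down when the intermediate boundary-component count is unattainable. Once the right bouquet is located, confirming that $^{\partial}\varepsilon_{G}(z)$ has a gap is a routine boundary-tracing check, and by partial-dual invariance the same non-interpolating polynomial is then exhibited by every ribbon graph having this bouquet as a partial dual.
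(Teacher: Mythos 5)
Your reductions are all sound: the pDe-polynomial is indeed a partial-dual invariant (this is Lemma~\ref{pro-2} of the paper), every connected ribbon graph has a bouquet among its partial duals, orientability is preserved by partial duality, and for a non-orientable three-edge bouquet the support of $^{\partial}\varepsilon$ lies in $\{1,2,3\}$, so non-interpolating means support exactly $\{1,3\}$. But the proposal stops exactly where the content of a refutation begins: a disproof of Conjecture~\ref{con-2} \emph{is} an explicit counterexample, and you never produce one. You yourself flag ``the existence of the gap'' as the main obstacle and defer it to ``a short exhaustive pass'' or ``a structural argument,'' neither of which is carried out; as written, nothing in the proposal rules out the possibility that every three-edge non-orientable bouquet has interpolating pDe-polynomial, in which case your plan terminates empty-handed. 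That missing witness is a genuine gap, not a routine detail.

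For comparison, the paper's refutation is Example~\ref{ex-2}: the prime four-edge bouquet $(a,b,c,d,-b,-a,c,d)$, whose pDe-polynomial is $4z^{2}+12z^{4}$, with support $\{2,4\}$. It is worth recording that your search space does in fact contain a counterexample, and a smaller one than the paper's: take the three-edge bouquet $(c,-c,a,b,a,b)$, a trivial twisted loop ribbon-joined to two interlaced untwisted loops, with signed sequence $(-0,1,1)$. By the join-multiplicativity of the pDe-polynomial (Lemma~\ref{lem-02}) together with the one- and two-edge entries of Table~\ref{tab-2}, its polynomial is $2z\,(2+2z^{2})=4z+4z^{3}$; equivalently, by Lemma~\ref{lem-01} the subsets $\{a\},\{b\},\{a,c\},\{b,c\}$ give Euler genus $1$ and the remaining four subsets give Euler genus $3$. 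The support $\{1,3\}$ is not a set of consecutive integers, so this bouquet refutes the conjecture. Note that this example is non-prime, hence does not appear in Table~\ref{tab-2} itself --- one needs the join formula to see it --- which may explain why it is easy to overlook. Had you executed the finite check you describe, your approach would have succeeded, and with a smaller example than the paper's; but a proposal that ends at ``I expect this to require an exhaustive pass'' is a search plan, not a proof.
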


In this paper, we first introduce a new notion of signed sequence for bouquets, i.e. ribbon graphs having a single vertex. Then we obtain the partial-dual Euler genus polynomials for all ribbon graphs with the number of edges less than 4,
the partial-dual orientable genus polynomials for all orientable ribbon graphs with the number of edges less than 5 using these sequences and we find a counterexample to Conjecture \ref{con-1}. Motivated by this counterexample, we further find an infinite family of counterexamples to the conjecture.
Moreover, we find a counterexample to Conjecture \ref{con-2}.

We assume that the readers are familiar with the basic knowledge of topological graph theory and in particular the ribbon graphs, and we refer the readers to \cite{EM, GT}.

\section{Signed sequences of bouquets}

A \emph{signed rotation} of a bouquet is a cyclic ordering of the half-edges at the vertex and if the edge is an untwisted loop, then we give the same sign $+$ to the corresponding two half-edges, and give the different signs (one $+$, the other $-$) otherwise. The sign $+$ is always omitted. See Figure \ref{f01} for an example. Sometimes we will use the signed rotation to represent the bouquet itself.
\begin{figure}[!htbp]
\begin{center}
\includegraphics[width=9cm]{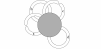}
\caption{A signed rotation of the bouquet is $(a, b, -a, c, b, -c, d, d)$.}
\label{f01}
\end{center}
\end{figure}

\begin{example}\label{ex-2}
Let $\Theta$ be a  non-orientable bouquet with the signed rotation $$(a, b, c, d, -b, -a, c, d).$$
Then $^{\partial}\varepsilon_{\Theta}(z)=4z^{2}+12z^{4}.$
\end{example}

Note that Example \ref{ex-2} is a counterexample to Conjecture \ref{con-2}.

Let $\Theta$ be a bouquet and let $E(\Theta)=\{e_{1}, \cdots, e_{n}\}$ and $e\in E(\Theta)$.
The \emph{interlace number} of $e$, denoted by  $\alpha(e)$, is defined to be
the number of edges which are all interlaced with $e$.
We say that $\beta(e)$ is the \emph{signed interlace number} of $e$, where
\begin{eqnarray*}
\beta(e)=\left\{\begin{array}{ll}
                      \alpha(e), & \mbox{if}~e~\mbox{is an untwisted loop,}\\
                    -\alpha(e), & \mbox{if}~e~\mbox{is a twisted loop.}
                   \end{array}\right.
\end{eqnarray*}

\begin{definition}
The \emph{signed sequence} of the bouquet $\Theta$, denoted by $\mathcal{S}(\Theta)=(\beta(e_{1}), \cdots, \beta(e_{n}))$, is obtained by sorting the signed interlace number from small to large, where $\beta(e_1)\leq \beta(e_2)\leq\cdots\leq \beta(e_n)$.
\end{definition}

Note that $\sum\limits_{1\leq i\leq n}\beta(e_{i})$ is always even.

\begin{example}\label{ex-1}
Let $\Theta$ be a bouquet with the signed rotation $$(a, b, -a, c, b, i, i, d, e, c, f, g, h, d, j, -j, h, -e, g, f).$$
Then $\mathcal{S}(\Theta)=(-4, -1, -0, 0, 1, 2, 2, 2, 3, 5)$ as listed in Table \ref{tab-1}.
\begin{table}
\normalsize
\begin{center}
\begin{tabular}{|c|c|c|c|c|c|c|c|c|c|c|}
\hline
Edges             & $a$ & $b$ & $c$ & $d$ & $e$ & $f$ & $g$ & $h$ & $i$ & $j$     \\ \hline
$\alpha$          & 1   & 2   & 3   & 5   & 4   & 2   & 2   & 1   & 0   & 0       \\ \hline
$\beta$           & -1  & 2   & 3   & 5   & -4  & 2   & 2   & 1   & 0   & -0      \\ \hline
\end{tabular}
\end{center}
\caption{Example \ref{ex-1}}
\label{tab-1}
\end{table}
\end{example}

\section{The partial-dual genus polynomials for ribbon graphs with small number of edges}

In this section we show that signed sequences are sufficient for us to determine the partial-dual genus polynomials of ribbon graphs with small number of edges. Let $P, Q$ be two ribbon graphs, we denote by $P\vee Q$ the \emph{ribbon-join} of $P$ and $Q$. Note that in general the ribbon-join is not unique. We need the following lemmas.

\begin{lemma}\label{lem-02}\cite{GMT}
Let $B=B_{1}\vee B_{2}\vee\cdots \vee B_{k}$. Then
\begin{enumerate}
\item[(1)] $^{\partial}\varepsilon_{B}(z)=~^{\partial}\varepsilon_{B_{1}}(z)~^{\partial}\varepsilon_{B_{2}}(z)\cdots~^{\partial}\varepsilon_{B_{k}}(z)$.
\item[(2)] If $B$ is orientable, then $^{\partial}\Gamma_{B}(z)=~^{\partial}\Gamma_{B_{1}}(z)~^{\partial}\Gamma_{B_{2}}(z)\cdots~^{\partial}\Gamma_{B_{k}}(z)$.
\end{enumerate}
\end{lemma}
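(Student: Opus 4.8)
The plan is to prove both parts simultaneously, reducing to the case $k=2$ and exploiting two structural facts: that partial duality acts \emph{locally} with respect to the ribbon-join, and that the Euler genus (respectively the orientable genus) is \emph{additive} under the ribbon-join. Since the ribbon-join is associative up to the choice of join-vertices, an easy induction on $k$ reduces everything to the identity $^{\partial}\varepsilon_{B_1\vee B_2}(z)=~^{\partial}\varepsilon_{B_1}(z)~^{\partial}\varepsilon_{B_2}(z)$, and likewise for $^{\partial}\Gamma$.

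First I would record the decomposition of edge subsets. Because $B=B_1\vee B_2$ is formed by identifying a single vertex of $B_1$ with a single vertex of $B_2$, its edge set is the disjoint union $E(B)=E(B_1)\sqcup E(B_2)$. Hence every $A\subseteq E(B)$ splits uniquely as $A=A_1\sqcup A_2$ with $A_i=A\cap E(B_i)$, and $A\leftrightarrow(A_1,A_2)$ is a bijection between subsets of $E(B)$ and pairs of subsets of $E(B_1),E(B_2)$. The crucial step is then the locality of partial duality, namely
\[
B^{A}=B_1^{A_1}\vee B_2^{A_2}.
\]
I would establish this from the band/arrow description of partial duality together with the fact that $B^{A}$ may be computed by dualizing the edges of $A$ one at a time. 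Each single-edge dualization is a modification of the ribbon structure in a neighbourhood of that edge, and since the blocks $B_1$ and $B_2$ meet only at the single join-vertex (a $0$-dimensional cut), dualizing an edge of $B_1$ cannot disturb the ribbon structure contributed by $B_2$ and vice versa; in particular the join-vertex persists as a cut-vertex of $B^{A}$. This is the step I expect to be the main obstacle, since it requires verifying that the separating structure of the ribbon-join is preserved under dualization, independently of which edges lie in $A$.

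Granting locality, I would then use additivity of the genus under the ribbon-join. A one-line Euler-characteristic count gives $v(P\vee Q)=v(P)+v(Q)-1$, $e(P\vee Q)=e(P)+e(Q)$, and $f(P\vee Q)=f(P)+f(Q)-1$ (joining at a vertex glues an arc of a boundary component of $P$ to one of $Q$, merging the two into one). Since $\varepsilon=2-v+e-f$ for a connected ribbon graph, these combine to $\varepsilon(P\vee Q)=\varepsilon(P)+\varepsilon(Q)$, and the same computation with $\varepsilon=2\gamma$ gives $\gamma(P\vee Q)=\gamma(P)+\gamma(Q)$ in the orientable case.

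Finally I would combine the three ingredients. Summing over the bijection $A\leftrightarrow(A_1,A_2)$, substituting $B^{A}=B_1^{A_1}\vee B_2^{A_2}$, and applying additivity of $\varepsilon$ gives
\[
^{\partial}\varepsilon_{B}(z)=\sum_{A_1\subseteq E(B_1)}\sum_{A_2\subseteq E(B_2)} z^{\,\varepsilon(B_1^{A_1})+\varepsilon(B_2^{A_2})}=\Bigl(\sum_{A_1}z^{\,\varepsilon(B_1^{A_1})}\Bigr)\Bigl(\sum_{A_2}z^{\,\varepsilon(B_2^{A_2})}\Bigr)={}^{\partial}\varepsilon_{B_1}(z)\,{}^{\partial}\varepsilon_{B_2}(z),
\]
which is part (1) for $k=2$; the identical manipulation with $\gamma$ in place of $\varepsilon$ (valid when $B$, and hence each $B_i$, is orientable) proves part (2). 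Induction on $k$ then completes the argument.
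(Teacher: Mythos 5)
The paper never proves this lemma: it is imported verbatim from \cite{GMT}, so there is no in-paper argument to compare against, and your proposal is supplying the proof that the citation replaces. Your overall architecture is the standard (and correct) one: reduce to $k=2$ by induction, split each $A\subseteq E(B)$ as $A_1\sqcup A_2$, establish locality $B^{A}=B_1^{A_1}\vee B_2^{A_2}$, establish additivity $\varepsilon(P\vee Q)=\varepsilon(P)+\varepsilon(Q)$ (resp.\ $\gamma(P\vee Q)=\gamma(P)+\gamma(Q)$ in the orientable case), and factor the sum. The Euler-characteristic count behind additivity ($v$ and $f$ each drop by one, $e$ is unchanged) is right, and the final factoring over the bijection $A\leftrightarrow(A_1,A_2)$ is routine.

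The step you flagged as the main obstacle is indeed where your justification is not quite correct as stated. It is false that ``the join-vertex persists'' under partial duality: take $B_1$ a plane loop $a$, $B_2$ a plane loop $b$, so $B=(a,a,b,b)$, and $A=\{a\}$; then $B^{A}$ has two vertices, the join vertex of $B$ having been split by the dualization, with $b$ attached to only one of the new vertices. What is true, and what your argument actually needs, is the weaker statement that $B^{A}$ is \emph{some} ribbon-join of $B_1^{A_1}$ and $B_2^{A_2}$, possibly at a different pair of vertices; since the paper itself notes that the ribbon-join is not unique, and since your additivity computation is independent of which vertices are joined, this weaker statement suffices. A clean way to prove it: in the arrow presentation, the join structure means that on the circle of the join vertex the $B_2$-arrows occupy an arc disjoint from the $B_1$-arrows, and every other circle carries arrows of only one $B_i$. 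Dualizing an edge $e\in A_1$ cuts circles only at the two $e$-arrows, which lie in $B_1$-arcs; hence the $B_2$-arc is never cut and survives intact on exactly one of the resulting circles. Induction over the edges of $A$ (using that $B^{A}$ can be computed one edge at a time) shows this mixed-circle structure is preserved throughout, which is exactly the assertion $B^{A}=B_1^{A_1}\vee B_2^{A_2}$ up to choice of join vertices. With that repair, your proof is complete.
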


\begin{lemma}\label{pro-2}
Let $G$ be a ribbon graph and $A\subseteq E(G)$. Then $^{\partial}\varepsilon_{G}(z)=~^{\partial}\varepsilon_{G^{A}}(z)$.
If $G$ is orientable, then $^{\partial}\Gamma_{G}(z)=~^{\partial}\Gamma_{G^{A}}(z)$.
\end{lemma}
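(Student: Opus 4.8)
The plan is to reduce everything to a single structural property of partial duality: partial-dual operations compose by symmetric difference. Concretely, I would invoke the identity $(G^{A})^{B}=G^{A\triangle B}$, valid for all $A,B\subseteq E(G)$ (Chmutov \cite{CG}), together with the fact that taking a partial dual leaves the edge set unchanged, so that $E(G^{A})=E(G)$. These two facts are the whole engine of the argument; once they are in hand, the remainder is just a re-indexing of the defining sum.

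For the pDe-polynomial I would begin from Definition \ref{def-1} applied to $G^{A}$, namely $^{\partial}\varepsilon_{G^{A}}(z)=\sum_{B\subseteq E(G^{A})}z^{\varepsilon((G^{A})^{B})}$. Since $E(G^{A})=E(G)$, the index $B$ ranges over all subsets of $E(G)$, and the composition law lets me replace $(G^{A})^{B}$ by $G^{A\triangle B}$. The key observation is then that the map $B\mapsto A\triangle B$ is an involution, hence a bijection, on the power set of $E(G)$. Substituting $C=A\triangle B$ therefore reindexes the sum into $\sum_{C\subseteq E(G)}z^{\varepsilon(G^{C})}$, which is exactly $^{\partial}\varepsilon_{G}(z)$, establishing the first assertion.

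The pDg-polynomial case runs along identical lines, but here I must first ensure that every term is meaningful. I would use the fact that orientability is preserved under partial duality: if $G$ is orientable, then so is every partial dual $G^{C}$, and in particular $G^{A}$ and each $(G^{A})^{B}$ are orientable, so that the orientable genus $\gamma$ is defined on every summand (this is also consistent with the relation $^{\partial}\Gamma_{G}(z)=~^{\partial}\varepsilon_{G}(z^{\frac{1}{2}})$ noted after Definition \ref{def-1}). With this in place, the same substitution $C=A\triangle B$ carries $^{\partial}\Gamma_{G^{A}}(z)$ to $^{\partial}\Gamma_{G}(z)$.

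The point I expect to need the most care is not the computation but the justification of its two ingredients: stating the composition identity $(G^{A})^{B}=G^{A\triangle B}$ in the correct form, and, for the orientable statement, citing that orientability is a partial-duality invariant so that $^{\partial}\Gamma_{G^{A}}(z)$ is even defined. Beyond these cited facts the argument collapses to a one-line bijection of index sets, so there is no genuine combinatorial obstacle to overcome.
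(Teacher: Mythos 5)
Your proposal is correct and is essentially the paper's own argument in expanded form: the paper's one-line proof asserts that $G$ and $G^{A}$ have the same set of partial duals, and your symmetric-difference bijection $B\mapsto A\triangle B$ together with $(G^{A})^{B}=G^{A\triangle B}$ is precisely the justification of that assertion. The extra care you take (preservation of the edge set and of orientability) is sound and fills in details the paper leaves implicit.
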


\begin{proof}
This is because the sets of all partial duals of $G$ and $G^A$ are the same.
\end{proof}

It is well known that every connected ribbon graph contains a bouquet in the set of its all partial duals. It suffices for us to consider the partial-dual genus polynomials for bouquets. A loop $e$ at the vertex of a bouquet $\Theta$ is \emph{trivial} if there is no loop of $\Theta$ which interlaces with $e$.

\begin{lemma}\label{pro-1}
Let $B$ be a bouquet and $e\in E(B)$. Then
\begin{eqnarray*}
^{\partial}\varepsilon_{B}(z)=\left\{\begin{array}{ll}
                    2~^{\partial}\varepsilon_{B-e}(z), & \mbox{if}~e~\mbox{is a trivial untwisted loop,}\\
                    2z~^{\partial}\varepsilon_{B-e}(z), & \mbox{if}~e~\mbox{is a trivial twisted loop.}
                   \end{array}\right.
\end{eqnarray*}
\end{lemma}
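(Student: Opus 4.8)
The plan is to reduce the computation to the case of a single-loop bouquet by exhibiting $e$ as a ribbon-join factor, and then to read off the polynomial of a one-edge bouquet directly. First I would record the structural consequence of triviality. Write the signed rotation of $B$ as a cyclic word in which the two half-edges of $e$ occur at two positions, splitting the remaining letters into two arcs. Because $e$ is trivial, no other loop interlaces $e$, so every edge of $B-e$ has both of its half-edges inside a single one of these two arcs; let $X$ collect the edges lying in the arc between the two half-edges of $e$, and let $Y$ collect those in the complementary arc. I would then argue that this yields a ribbon-join decomposition $B = X \vee \{e\} \vee Y$, where $\{e\}$ denotes the one-loop bouquet on $e$ (and $X$ or $Y$ may be empty). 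Concretely, cutting the rotation just after the first half-edge of $e$ and just before the second exhibits $B$ as $X \vee (\{e\}\vee Y)$, and since in $\{e\}\vee Y$ the two half-edges of $e$ are now adjacent, a further cut isolates $\{e\}$; applying the same two cuts to $B-e$ shows $B-e = X \vee Y$.

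Next I would apply Lemma \ref{lem-02}(1) to both decompositions. This gives ${}^{\partial}\varepsilon_{B}(z) = {}^{\partial}\varepsilon_{X}(z)\,{}^{\partial}\varepsilon_{\{e\}}(z)\,{}^{\partial}\varepsilon_{Y}(z)$ and ${}^{\partial}\varepsilon_{B-e}(z) = {}^{\partial}\varepsilon_{X}(z)\,{}^{\partial}\varepsilon_{Y}(z)$, so that ${}^{\partial}\varepsilon_{B}(z) = {}^{\partial}\varepsilon_{\{e\}}(z)\,{}^{\partial}\varepsilon_{B-e}(z)$. It therefore remains only to compute the pDe-polynomial of the one-edge bouquet $\{e\}$.

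Finally I would evaluate ${}^{\partial}\varepsilon_{\{e\}}(z)$ directly. The bouquet $\{e\}$ has exactly two partial duals, indexed by $A = \emptyset$ and $A = \{e\}$, and the latter is the full geometric dual $\{e\}^{*}$. Since geometric duality takes place inside a fixed surface, it preserves the Euler-genus, so $\varepsilon(\{e\}^{*}) = \varepsilon(\{e\})$. A one-vertex, one-edge ribbon graph has Euler characteristic $1 - 1 + f$, where $f$ is its number of boundary components; an untwisted loop has $f = 2$ and hence Euler-genus $0$, while a twisted loop has $f = 1$ and hence Euler-genus $1$. Thus ${}^{\partial}\varepsilon_{\{e\}}(z) = 2$ when $e$ is untwisted and ${}^{\partial}\varepsilon_{\{e\}}(z) = 2z$ when $e$ is twisted, and substituting into the displayed identity yields the two claimed formulas.

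I expect the main obstacle to be the first step: justifying rigorously that the non-interlacement (triviality) of $e$ really produces the ribbon-join decomposition $B = X \vee \{e\} \vee Y$. The subtlety is that the edges of $X$ sit nested \emph{inside} the loop $e$ rather than beside it, so one must check that the two boundary cuts genuinely separate the vertex-disk into sub-bouquets without entangling any ribbons — equivalently, that edges of $X$ and edges of $Y$ never interlace, which again follows from triviality of $e$. Once this decomposition is secured, the remaining steps are short and purely computational.
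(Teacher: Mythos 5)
Your proposal is correct and follows essentially the same route as the paper's own proof: the paper likewise uses triviality of $e$ to write the signed rotation as $(e,\Phi,-e,\Psi)$, decomposes $B$ as the ribbon-join of the one-loop bouquet on $e$ with the bouquets $(\Phi)$ and $(\Psi)$, and applies Lemma \ref{lem-02}. The only difference is that you spell out the evaluation ${}^{\partial}\varepsilon_{\{e\}}(z)=2$ or $2z$ and the justification of the join decomposition, both of which the paper leaves implicit.
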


\begin{proof}
If $e$ is a trivial twisted loop, suppose that the signed rotation of $B$ is $(e, \Phi, -e, \Psi)$.
Then $$B=B_{1}\vee B_{2}\vee B_{3},$$ where signed rotations of $B_{1}$, $B_{2}$ and $B_{3}$ are $(e, -e)$, $(\Phi)$, and $(\Psi)$, respectively.
It follows that $$^{\partial}\varepsilon_{B}(z)=~^{\partial}\varepsilon_{B_{1}}(z)~^{\partial}\varepsilon_{B_{2}}(z)~^{\partial}\varepsilon_{B_{3}}(z)=
2z~^{\partial}\varepsilon_{B_{2}}(z)~^{\partial}\varepsilon_{B_{3}}(z)=2z~^{\partial}\varepsilon_{B-e}(z),$$
by Lemma \ref{lem-02}.
The same reasoning applies to the case when $e$ is a trivial untwisted loop, obtaining $^{\partial}\varepsilon_{B}(z)=2~^{\partial}\varepsilon_{B-e}(z)$.
\end{proof}

\begin{corollary}\label{pro-3}
Let $B$ be a bouquet with $$\mathcal{S}(B)=(\mathcal{P}, \overbrace{-0, \cdots, -0, }^{i}\overbrace{0, \cdots,  0,}^{j}\mathcal{Q}).$$ Then $$^{\partial}\varepsilon_{B}(z)=2^{i+j}z^{i}~^{\partial}\varepsilon_{B-E'}(z),$$
where $E'=\{e\mid\beta(e)=-0~\mbox{or}~0\}.$ Moreover, $$\mathcal{S}(B-E')=(\mathcal{P}, \mathcal{Q}).$$
\end{corollary}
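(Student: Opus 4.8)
The plan is to read the statement as an iterated application of Lemma~\ref{pro-1}. First I would set up the dictionary between the $\pm 0$ entries and trivial loops: in a bouquet every edge is a loop, and $\alpha(e)=0$ precisely means that no edge is interlaced with $e$, i.e.\ that $e$ is trivial. Hence $\beta(e)=0$ marks $e$ as a trivial untwisted loop and $\beta(e)=-0$ marks $e$ as a trivial twisted loop, so $E'$ is exactly the set of trivial loops of $B$, comprising $i$ trivial twisted loops and $j$ trivial untwisted loops.

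The key observation I would isolate is that deleting a trivial loop leaves the interlacement data of every other edge untouched. Two edges interlace iff their half-edges alternate around the cyclic signed rotation; since a trivial loop $e$ can, as in the proof of Lemma~\ref{pro-1}, be split off as a ribbon-join factor $B=B_{1}\vee(B-e)$, the signed rotation of $B-e$ is just that of $B$ with the two half-edges of $e$ erased. Erasing those two symbols does not change the relative cyclic order of the half-edges of any remaining pair of edges, so every surviving $\alpha$-value, and hence every surviving $\beta$-value, is preserved. In particular, any trivial loop other than $e$ remains trivial.

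With this in hand I would induct on $i+j$. At each step I choose a loop of $E'$ and apply Lemma~\ref{pro-1}: a trivial untwisted loop contributes a factor $2$ and a trivial twisted loop contributes a factor $2z$, and by the key observation the loops of $E'$ not yet removed are still trivial, so the recursion continues. Removing all $j$ untwisted and all $i$ twisted trivial loops accumulates the prefactor $2^{j}(2z)^{i}=2^{i+j}z^{i}$, which gives $^{\partial}\varepsilon_{B}(z)=2^{i+j}z^{i}~^{\partial}\varepsilon_{B-E'}(z)$. The \emph{moreover} statement follows from the same observation: the $\beta$-values of the edges making up $\mathcal{P}$ and $\mathcal{Q}$ are never disturbed by any of the deletions, while all the $\pm 0$ entries have been discarded, so $\mathcal{S}(B-E')=(\mathcal{P},\mathcal{Q})$.

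I expect the one genuinely substantive point to be the key observation of the second paragraph — that triviality is preserved under the successive deletions and that no surviving signed interlace number shifts. Once that is secured, the multiplicative bookkeeping of the genus polynomial and the counting of the factors $2$ and $2z$ are purely routine.
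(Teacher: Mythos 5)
Your proof is correct and follows essentially the same route as the paper: identify the $\pm 0$ entries with trivial twisted/untwisted loops and apply Lemma~\ref{pro-1} repeatedly. The only difference is that you spell out the step the paper leaves implicit — that deleting a trivial loop preserves every other edge's signed interlace number, so the remaining loops of $E'$ stay trivial and the iteration (and the ``moreover'' claim) goes through — which is a worthwhile clarification but not a different argument.
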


\begin{proof}
For any $e\in E(B)$, if $\beta(e)=-0$, then $e$ is a trivial twisted loop. If $\beta(e)=0$, then $e$ is a trivial untwisted loop.
It follows immediately from Lemma \ref{pro-1}.
\end{proof}

A ribbon graph is called \emph{empty} if it has no edges.  We say that $G$ is \emph{prime}, if there don't exist non-empty ribbon subgraphs $G_{1}, \cdots,  G_{k}$ of $G$
such that $G=G_{1}\vee\cdots \vee G_{k}$ where $k\geq 2$. Otherwise, $G$ is called \emph{non-prime}. Let $e(\Theta)$ be the number of loops of the theta graph $\Theta$.

\begin{theorem}\label{th-02}
Let $\Theta_{1}$ and $\Theta_{2}$ be prime bouquets with $e(\Theta_{i})\leq 3, i\in \{1, 2\}$. Then $\mathcal{S}(\Theta_{1})=\mathcal{S}(\Theta_{2})$ if and only if $\Theta_{1}=\Theta_{2}.$
\end{theorem}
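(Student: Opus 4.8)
The plan is to prove the nontrivial (``if'') direction by a finite classification organized by the number of loops; the ``only if'' direction is immediate, since isomorphic bouquets have identical interlace data and hence identical signed sequences. So from now on I assume $\mathcal{S}(\Theta_1)=\mathcal{S}(\Theta_2)$ and aim to reconstruct the bouquet uniquely from this invariant.

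First I would record a structural reduction that makes the classification manageable. If a prime bouquet has at least two loops, then its interlacement (circle) graph must be connected: a disconnected interlacement graph would split the signed rotation into two arcs with no interlacement across them, which is exactly a nontrivial ribbon-join $\Theta=\Theta'\vee\Theta''$, contradicting primality. In particular every loop $e$ then satisfies $\alpha(e)\ge 1$, so whenever $e(\Theta_i)\ge 2$ no symbol $\pm 0$ appears in $\mathcal{S}(\Theta_i)$. This reduces the problem to examining connected interlacement graphs on $1$, $2$, or $3$ vertices, and lets me split the argument by $e(\Theta_i)$.

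The small cases are short: for one loop there are exactly the two prime bouquets (untwisted, twisted) with sequences $(0)$ and $(-0)$; for two loops primality forces $\alpha=(1,1)$, and the three twist-classes realize the distinct sequences $(1,1)$, $(-1,1)$, $(-1,-1)$, each by a unique bouquet. For $e(\Theta_i)=3$ the connected interlacement graphs are the path $P_3$ and the triangle $K_3$, and since $\alpha(e)=|\beta(e)|$ these are already distinguished by the unsigned multiset underlying $\mathcal{S}$, namely $\{1,1,2\}$ versus $\{2,2,2\}$. The key lemma I would verify, by directly enumerating the finitely many three-chord diagrams, is that within each case the underlying unsigned chord diagram is unique up to rotation and reflection, realized by $(a,b,a,c,b,c)$ for the path and $(a,b,c,a,b,c)$ for the triangle. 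Granting this, the only remaining freedom is the assignment of twists, recorded exactly by the signs in $\mathcal{S}$, and I would finish by matching the symmetry of each realization to the multiset nature of $\mathcal{S}$: the triangle diagram is invariant under all permutations of its three loops, so a bouquet is determined by the number of twisted loops, giving $(2,2,2)$, $(-2,2,2)$, $(-2,-2,2)$, $(-2,-2,-2)$; for the path the middle loop is distinguished by $|\beta|=2$ while the reflection swapping the two ends preserves each loop's twist, so a bouquet is determined by the twist of the middle loop and the multiset of twists of the two ends, giving the six sequences $(1,1,2)$, $(-1,1,2)$, $(-1,-1,2)$, $(-2,1,1)$, $(-2,-1,1)$, $(-2,-1,-1)$. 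All ten three-loop sequences are distinct (the triangle family is entirely $\pm 2$, the path family always contains a $\pm 1$), and the parity condition $\sum\beta$ even holds automatically in each.

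The hard part will be the $e(\Theta_i)=3$ step, in two linked pieces. The first is establishing realization-uniqueness of the unsigned chord diagram for $P_3$ and $K_3$; this is a genuine issue because in general a circle graph need not determine its chord diagram, so I must lean on the explicit enumeration available for three chords rather than a general uniqueness theorem. The second is the symmetry bookkeeping: I must confirm that the sorting/multiset nature of $\mathcal{S}$ neither conflates two non-isomorphic bouquets nor artificially separates isomorphic ones, and in particular that the end-swap symmetry of the path is realized by an actual ribbon-graph isomorphism (the reflection of $(a,b,a,c,b,c)$) that respects twists, so that ``twist one end'' and ``twist the other end'' really do give the same bouquet.
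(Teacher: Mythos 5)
Your proposal is correct and arrives at exactly the paper's classification: the paper's own proof of Theorem~\ref{th-02} is nothing more than a table of the fifteen signed sequences, each paired with a picture of the unique prime bouquet realizing it, with the verification declared ``straightforward.'' Your argument is the same finite enumeration, but you make explicit the three facts that the table silently relies on: primality forces a connected interlacement graph (so no $\pm 0$ entry occurs once $e(\Theta_i)\ge 2$, and the three-edge case reduces to $P_3$ versus $K_3$); the three-chord diagrams realizing $P_3$ and $K_3$ are unique up to rotation, reflection and relabelling; and the diagram symmetries (the full $S_3$-action on the triangle, the twist-preserving end-swap reflection on the path) guarantee that passing to the sorted multiset $\mathcal{S}$ conflates only isomorphic bouquets, yielding the $2+3+10=15$ cases. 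The one phrase you should tighten is ``a disconnected interlacement graph would split the signed rotation into two arcs'': chords in distinct components may be nested rather than lying on disjoint arcs, so extracting a ribbon-join from disconnectedness needs a short nesting argument (or, at this size, the direct enumeration you already plan to do); this is a matter of wording, not a gap in the proof.
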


\begin{proof}

There are 15 cases and the proof is straightforward  as shown in the following table.
\begin{longtable}{|c|c|c|c|}
\hline
Cases & \# Edges & Signed sequences & Bouquets \ \ \ \ \\ \hline
{\bf Case 1}  & $e=1$ & (0)         & \includegraphics[width=1.5cm]{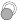} \\ \hline
{\bf Case 2}  & $e=1$ &(-0)         & \includegraphics[width=1.5cm]{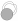} \\ \hline
{\bf Case 3}  & $e=2$ & (1, 1)      & \includegraphics[width=2cm]{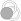} \\ \hline
{\bf Case 4}  & $e=2$ &(-1, 1)      & \includegraphics[width=2cm]{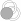} \\ \hline
{\bf Case 5}  & $e=2$ &(-1, -1)     & \includegraphics[width=2cm]{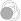} \\ \hline
{\bf Case 6}  & $e=3$ &(1, 1, 2)   & \includegraphics[width=2.5cm]{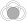} \\ \hline
{\bf Case 7}  & $e=3$ &(2, 2, 2)    & \includegraphics[width=2.5cm]{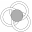} \\ \hline
{\bf Case 8}  & $e=3$ &(-1, 1, 2)   & \includegraphics[width=2.5cm]{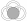} \\ \hline
{\bf Case 9}  & $e=3$ &(-2, 1, 1)   & \includegraphics[width=2.5cm]{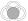} \\ \hline
{\bf Case 10} & $e=3$ &(-2, 2, 2)   & \includegraphics[width=2.5cm]{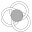} \\ \hline
{\bf Case 11} & $e=3$ &(-1, -1, 2)  & \includegraphics[width=2.5cm]{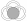} \\ \hline
{\bf Case 12} & $e=3$ &(-2, -1, 1)  & \includegraphics[width=2.5cm]{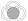} \\ \hline
{\bf Case 13} & $e=3$ &(-2, -2, 2)  & \includegraphics[width=2.5cm]{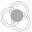} \\ \hline
{\bf Case 14} & $e=3$ &(-2, -1, -1) & \includegraphics[width=2.5cm]{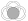} \\ \hline
{\bf Case 15} & $e=3$ &(-2, -2, -2) & \includegraphics[width=2.5cm]{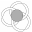} \\ \hline
\end{longtable}
\end{proof}

\begin{remark}
Theorem \ref{th-02} is not true for non-prime bouquets. For example, suppose that $\Theta_{1}=(a, a, b, b, c, c)$ and $\Theta_{2}=(a, a, b, c, c, b)$.
It is obvious that $\mathcal{S}(\Theta_{1})=\mathcal{S}(\Theta_{2})=(0, 0, 0)$, but $\Theta_{1}\neq\Theta_{2}$.
\end{remark}

\begin{theorem}\label{th-04}
Let $\Theta_{1}$ and $\Theta_{2}$ be bouquets. If $\mathcal{S}(\Theta_{1})=\mathcal{S}(\Theta_{2})$ and $e(\Theta_{i})\leq 3, i\in \{1, 2\}$, then
$$^{\partial}\varepsilon_{\Theta_{1}}(z)=~^{\partial}\varepsilon_{\Theta_{2}}(z).$$
\end{theorem}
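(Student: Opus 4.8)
The plan is to reduce the general (possibly non-prime) case to the prime case already settled in Theorem \ref{th-02}, using Corollary \ref{pro-3} to strip away all trivial loops first. I would begin by writing the common signed sequence in the normal form of Corollary \ref{pro-3}, namely $(\mathcal{P}, -0, \cdots, -0, 0, \cdots, 0, \mathcal{Q})$ with $i$ copies of $-0$ and $j$ copies of $0$, where $\mathcal{P}$ collects the negative nonzero entries and $\mathcal{Q}$ the positive nonzero entries. Since $\mathcal{S}(\Theta_1)=\mathcal{S}(\Theta_2)$, the two bouquets share the same counts $i,j$ and the same nonzero part $(\mathcal{P},\mathcal{Q})$. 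Let $E_{k}'$ be the set of trivial loops of $\Theta_k$ for $k\in\{1,2\}$. By Corollary \ref{pro-3}, ${}^{\partial}\varepsilon_{\Theta_k}(z)=2^{i+j}z^{i}\,{}^{\partial}\varepsilon_{\Theta_k-E_{k}'}(z)$ and $\mathcal{S}(\Theta_k-E_{k}')=(\mathcal{P},\mathcal{Q})$. Hence it suffices to prove ${}^{\partial}\varepsilon_{\Theta_1-E_{1}'}(z)={}^{\partial}\varepsilon_{\Theta_2-E_{2}'}(z)$; that is, I may assume from now on that every entry of the signed sequence is nonzero.

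The key structural observation I would establish next is that a bouquet $B$ with at most three edges and no zero entry in its signed sequence must be prime. Suppose instead that $B=B_{1}\vee\cdots\vee B_{m}$ with $m\geq 2$ and each $B_\ell$ non-empty. Because the total number of edges is at most $3$, the partition of $E(B)$ induced by this ribbon-join decomposition must contain a block consisting of a single edge. An edge alone in its block interlaces with no other edge of $B$ (half-edges belonging to distinct ribbon-join blocks appear in separate arcs of the signed rotation and therefore never alternate), so it is a trivial loop and contributes a $0$ or $-0$ entry to $\mathcal{S}(B)$, contradicting our standing assumption. Thus $\Theta_1-E_{1}'$ and $\Theta_2-E_{2}'$ are both prime.

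Finally, I would invoke Theorem \ref{th-02}. The two reduced bouquets are prime, each has at most three edges, and they share the signed sequence $(\mathcal{P},\mathcal{Q})$; therefore $\Theta_1-E_{1}'=\Theta_2-E_{2}'$, and in particular ${}^{\partial}\varepsilon_{\Theta_1-E_{1}'}(z)={}^{\partial}\varepsilon_{\Theta_2-E_{2}'}(z)$. Multiplying back the common factor $2^{i+j}z^{i}$ gives ${}^{\partial}\varepsilon_{\Theta_1}(z)={}^{\partial}\varepsilon_{\Theta_2}(z)$, as required.

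I do not anticipate a serious obstacle, since the heavy lifting is done by Theorem \ref{th-02} and Corollary \ref{pro-3}; the one point that needs genuine care is the primeness claim, together with the degenerate subcases of the reduction. I would verify explicitly that a single leftover nonzero edge is impossible (a lone loop interlaces with nothing and hence has interlace number $0$, so it would already have been removed), and that when $\Theta_k$ consists entirely of trivial loops both reduced bouquets are empty and the identity is immediate. It is worth noting that the Remark after Theorem \ref{th-02} shows signed sequences do \emph{not} determine non-prime bouquets in general, but the witnessing examples (such as $(a,a,b,b,c,c)$ versus $(a,a,b,c,c,b)$) consist solely of trivial loops; the stripping step above is exactly what neutralizes this ambiguity, since all such loops are deleted before Theorem \ref{th-02} is applied and they contribute identical factors $2^{i+j}z^{i}$ to both polynomials.
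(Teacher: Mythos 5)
Your proposal is correct, and it rests on the same two pillars as the paper's proof (Corollary \ref{pro-3} to strip trivial loops, Theorem \ref{th-02} to identify prime bouquets), but it is organized along a genuinely different and cleaner route. The paper first splits into the cases ``both prime'' and ``both non-prime,'' then for non-prime bouquets enumerates the number $k$ of prime factors and lists all possible signed sequences ($k=2$ gives nine sequences, $k=3$ gives four), proving only the representative case $(-1,0,1)$ in detail. You instead strip \emph{all} zero entries from both bouquets at the outset and then prove a single structural lemma: a bouquet with at most three edges and no zero entry in its signed sequence is prime, because any ribbon-join decomposition of such a small bouquet would by pigeonhole contain a single-edge block, and that edge would be a trivial loop contributing a $0$ or $-0$ entry. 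This uniform reduction buys two things. First, it eliminates the case enumeration entirely. Second, it quietly repairs a gap in the paper's own dichotomy: the paper's ``otherwise, $\Theta_1$ and $\Theta_2$ are non-prime'' tacitly assumes that equal signed sequences force equal primality status, which is true but requires exactly the kind of argument you give; your version never needs that assumption, since after stripping, both reduced bouquets are prime by the same lemma. Your attention to the degenerate subcases (empty reduced bouquet; the impossibility of a single leftover nonzero edge) is also exactly the care this argument needs.
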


\begin{proof}
If $\Theta_{1}$ and $\Theta_{2}$ are prime bouquets, then $\Theta_{1}=\Theta_{2}$ by Theorem \ref{th-02}. Therefore, $$^{\partial}\varepsilon_{\Theta_{1}}(z)=~^{\partial}\varepsilon_{\Theta_{2}}(z).$$
Otherwise, $\Theta_{1}$ and $\Theta_{2}$ are non-prime bouquets. Then there exist non-empty ribbon subgraphs $B_{1}, \cdots,  B_{k}$ of $\Theta_{1}$
such that $\Theta_{1}=B_{1}\vee\cdots \vee B_{k}$ where $B_{j}$ is a prime bouquet for any $1\leq j\leq k$. Since $e(\Theta_{1})\leq 3$, it follows that there are two cases: $k=2$ or $k=3$.

If $k=2$, then $\mathcal{S}(\Theta_{1})\in \{(0, 0), (-0, 0), (-0, -0)$, $(0, 1, 1)$, $(-1, 0, 1),$ $(-1, -1, 0)$, $(-0, 1, 1)$, $(-1, -0, 1), (-1, -1, -0)\}$. We give the proof only for the case $\mathcal{S}(\Theta_{1})=\mathcal{S}(\Theta_{2})=(-1, 0, 1)$. Similar arguments apply to the other cases.
Suppose that $e\in E(\Theta_{1})$ and $e'\in E(\Theta_{2})$ with $\beta(e)=\beta(e')=0$. Then $\mathcal{S}(\Theta_{1}-e)=\mathcal{S}(\Theta_{2}-e')=(-1, 1)$. Note that $\Theta_{1}-e$ and $\Theta_{2}-e'$ are prime bouquets. Thus $\Theta_{1}-e=\Theta_{2}-e'$ by Theorem \ref{th-02}.
We know $^{\partial}\varepsilon_{\Theta_{1}}(z)=2~^{\partial}\varepsilon_{\Theta_{1}-e}(z)$ and
$^{\partial}\varepsilon_{\Theta_{2}}(z)=2~^{\partial}\varepsilon_{\Theta_{2}-e'}(z)$ by Corollary \ref{pro-3}.
Therefore, $^{\partial}\varepsilon_{\Theta_{1}}(z)=~^{\partial}\varepsilon_{\Theta_{2}}(z).$

If $k=3$, then $\mathcal{S}(\Theta_{1})\in \{(0, 0, 0), (-0, 0, 0), (-0, -0, 0), (-0, -0, -0)\}.$
For any case, the proof is immediately by Corollary \ref{pro-3}.
\end{proof}

\begin{remark}
If $e(\Theta_{i})\geq 4$, Theorem \ref{th-04} is sometimes wrong. For example, $$\Theta_{1}=(a, c, -a, d, b, d, c, -b),$$ and $$\Theta_{2}=(a, c, b, -a, -b, d, c, d).$$ Then $\mathcal{S}({\Theta_{1}})=\mathcal{S}({\Theta_{2}})=(-2, -1, 1, 2),$ but $$^{\partial}\varepsilon_{\Theta_{1}}(z)=4z^{2}+8z^{3}+4z^{4},$$ $$^{\partial}\varepsilon_{\Theta_{2}}(z)=2z+2z^{2}+8z^{3}+4z^{4}.$$
\end{remark}

\begin{theorem}\label{th-03}
Let $\Theta_{1}$ and $\Theta_{2}$ be orientable bouquets with $e(\Theta_{i})\leq 4, i\in \{1, 2\}$. Then $\mathcal{S}(\Theta_{1})=\mathcal{S}(\Theta_{2})$  if and only if $\Theta_{1}=\Theta_{2}.$
\end{theorem}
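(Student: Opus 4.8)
The plan is to mirror the proof of Theorem \ref{th-02}: classify all the relevant orientable bouquets with at most four edges and exhibit a table in which every admissible signed sequence is matched against a single bouquet. The backward implication is immediate, since $\mathcal{S}$ is built from the interlace numbers $\alpha(e)$, which are invariants of the ribbon graph, so $\Theta_{1}=\Theta_{2}$ forces $\mathcal{S}(\Theta_{1})=\mathcal{S}(\Theta_{2})$. All the content lies in the forward implication. As in Theorem \ref{th-02}, the relevant setting is that of prime bouquets: the remark after Theorem \ref{th-02} already exhibits two distinct orientable $3$-edge bouquets with equal signed sequence, so for the forward direction I would treat prime orientable bouquets and check that distinct ones never share a signed sequence.

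First I would record two simplifications peculiar to the orientable case. An orientable bouquet has no twisted loops, so $\beta(e)=\alpha(e)\ge 0$ for every edge and signed sequences are just sorted sequences of non-negative integers. Moreover a loop with $\alpha(e)=0$ is a trivial loop and, by Corollary \ref{pro-3} together with Lemma \ref{lem-02}, splits off as a separate join factor; hence a prime bouquet with at least two edges has $\alpha(e)\ge 1$ for every $e$. Since a ribbon-join forces its two factors to be mutually non-interlacing, a bouquet is prime exactly when its interlacement graph is connected, and because $\alpha(e)$ is precisely the degree of $e$ in that graph, the signed sequence of a prime orientable bouquet is the degree sequence of a connected circle graph.

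The heart of the argument is the case $e=4$. The six connected graphs on four vertices --- the path, the star $K_{1,3}$, the cycle $C_{4}$, the paw, the diamond $K_{4}-e$, and $K_{4}$ --- have the pairwise distinct degree sequences $(1,1,2,2)$, $(1,1,1,3)$, $(2,2,2,2)$, $(1,2,2,3)$, $(2,2,3,3)$, $(3,3,3,3)$, and each is realizable as an interlacement graph. Thus two prime orientable $4$-edge bouquets with equal signed sequence have isomorphic interlacement graphs, and it remains to show that each of these six circle graphs is realized by a unique chord diagram up to rotation, reflection and relabelling. I would verify this one graph at a time: fix a reference chord at a maximum-degree vertex, insert the remaining chords so as to reproduce the prescribed crossings, and check that every admissible insertion is equivalent, under the dihedral and relabelling symmetries, to a single canonical diagram. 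Assembling this with the smaller cases $e=1$ (sequence $(0)$), $e=2$ (sequence $(1,1)$) and $e=3$ (the two orientable rows $(1,1,2)$ and $(2,2,2)$ already supplied by Theorem \ref{th-02}) then produces the full table.

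The step I expect to be the genuine obstacle is this last one: showing that each of the six degree sequences pins down a \emph{unique} ribbon graph. Distinctness of the degree sequences only separates the six interlacement graphs from one another; it does not, by itself, preclude a single circle graph from admitting two inequivalent chord-diagram realizations (circle graphs generally do have several, related by local complementation), which would yield two non-isomorphic bouquets with identical signed sequence and break the theorem. Ruling this out for the denser graphs $C_{4}$, the diamond and $K_{4}$, where several a priori distinct insertions must be shown to collapse under the symmetries, is where the case analysis is most delicate --- and it is exactly the point at which the analogous statement breaks down once a trivial loop is reintroduced or a fifth edge is allowed, as the remark following Theorem \ref{th-04} makes explicit.
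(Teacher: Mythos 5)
Your proposal is correct and takes essentially the same route as the paper: the paper's proof of Theorem \ref{th-03} is exactly a finite case check that the six signed sequences $(1,1,1,3)$, $(1,1,2,2)$, $(1,2,2,3)$, $(2,2,2,2)$, $(2,2,3,3)$, $(3,3,3,3)$ --- whose enumeration your connected-interlacement-graph/degree-sequence argument explains --- each admit a unique realization by a prime orientable bouquet, exhibited in a table of figures, with the remaining cases deferred to Theorem \ref{th-02}. Your restriction to prime bouquets matches the paper's implicit intent (its proof and the application in Theorem \ref{th-05} treat exactly the prime case, and the remark after Theorem \ref{th-02} shows primality is necessary), so your framing merely organizes, rather than alters, the same case analysis the paper carries out pictorially.
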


\begin{proof}
We give the proof only for the case $e(\Theta_{1})=e(\Theta_{2})=4$, the other cases have been discussed in Theorem \ref{th-02}.
There are 6 cases and the proof is straightforward as shown in the following table.

\begin{longtable}{|c|c|c|}
\hline
Cases &  Signed sequences & Bouquets\\ \hline
{\bf Case 1}   & (1, 1, 1, 3)         & \includegraphics[width=2.5cm]{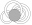} \\ \hline
{\bf Case 2}   & (1, 1, 2, 2)         & \includegraphics[width=2.5cm]{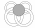} \\ \hline
{\bf Case 3}   & (1, 2, 2, 3)         & \includegraphics[width=2.5cm]{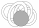} \\ \hline
{\bf Case 4}   & (2, 2, 2, 2)         & \includegraphics[width=2.5cm]{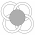} \\ \hline
{\bf Case 5}   & (2, 2, 3, 3)         & \includegraphics[width=2.5cm]{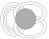} \\ \hline
{\bf Case 6}   & (3, 3, 3, 3)         & \includegraphics[width=2.5cm]{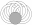} \\ \hline
\end{longtable}
\end{proof}

\begin{remark}
~

\begin{itemize}
  \item If $e(\Theta_{i})\geq 5$, Theorem \ref{th-03} is sometimes wrong. For example, let $$\Theta_{1}=(a, b, c, a, d, c, e, b, d, e)$$ and $$\Theta_{2}=(a, b, c, a, d, e, c, b, e, d).$$ Then $\mathcal{S}({\Theta_{1}})=\mathcal{S}({\Theta_{2}})=(2, 2, 2, 3, 3),$ but numbers of faces
      $$f({\Theta_{1}})=2, f({\Theta_{2}})=4.$$ Thus  $\Theta_{1}\neq\Theta_{2}.$
  \item If $\Theta_{1}$ and $\Theta_{2}$ are non-orientable bouquets, Theorem \ref{th-03} is sometimes wrong. For example, let $$\Theta_{1}=(a, b, -a, c, -b, d, c, d),$$ and $$\Theta_{2}=(a, b, c, -b, d, -a, d, c).$$ It is obvious that  $\mathcal{S}({\Theta_{1}})=\mathcal{S}({\Theta_{2}})=(-2, -1, 1, 2),$ but
      $$f({\Theta_{1}})=2, f({\Theta_{2}})=1.$$ Therefore, $\Theta_{1}\neq\Theta_{2}.$

\end{itemize}
\end{remark}

\begin{theorem}\label{th-05}
Let $\Theta_{1}$ and $\Theta_{2}$ be orientable bouquets. If $\mathcal{S}(\Theta_{1})=\mathcal{S}(\Theta_{2})$ and $e(\Theta_{i})\leq 4, i\in \{1, 2\}$, then $$^{\partial}\Gamma_{\Theta_{1}}(z)=~^{\partial}\Gamma_{\Theta_{2}}(z).$$
\end{theorem}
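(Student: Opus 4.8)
The plan is to deduce the orientable statement from the Euler-genus statement and then mimic the proof of Theorem~\ref{th-04}, with Theorem~\ref{th-03} playing the role that Theorem~\ref{th-02} played there. Because $\Theta_{1}$ and $\Theta_{2}$ are orientable, the relation recorded after Definition~\ref{def-1} gives ${}^{\partial}\Gamma_{\Theta_{i}}(z)={}^{\partial}\varepsilon_{\Theta_{i}}(z^{1/2})$, so it is enough to establish ${}^{\partial}\varepsilon_{\Theta_{1}}(z)={}^{\partial}\varepsilon_{\Theta_{2}}(z)$; this lets me invoke Lemma~\ref{lem-02}(1) and Corollary~\ref{pro-3} for the Euler-genus polynomial directly, without having to re-prove their orientable analogues. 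Orientability also forces every loop to be untwisted, so all entries of $\mathcal{S}(\Theta_{i})$ are nonnegative and the only trivial-loop entries are $0$. Since $\mathcal{S}(\Theta_{1})=\mathcal{S}(\Theta_{2})$ forces $e(\Theta_{1})=e(\Theta_{2})$, and the cases $e(\Theta_{i})\le 3$ are already covered by Theorem~\ref{th-04} (composed with the substitution $z\mapsto z^{1/2}$), I may assume $e(\Theta_{1})=e(\Theta_{2})=4$.

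With $e=4$ fixed, I would first dispose of the prime case: if $\Theta_{1}$ and $\Theta_{2}$ are both prime, Theorem~\ref{th-03} yields $\Theta_{1}=\Theta_{2}$ outright, hence equal polynomials. Next I would handle the non-prime signed sequences that contain a $0$. Stripping all trivial untwisted loops via Corollary~\ref{pro-3} rewrites each ${}^{\partial}\varepsilon_{\Theta_{i}}(z)$ as $2^{m}\,{}^{\partial}\varepsilon_{\Theta_{i}'}(z)$, where $\Theta_{i}'$ is $\Theta_{i}$ with its trivial loops deleted, the exponent $m$ equals the common number of $0$'s, and $\mathcal{S}(\Theta_{1}')=\mathcal{S}(\Theta_{2}')$ with $e(\Theta_{i}')\le 3$. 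Theorem~\ref{th-04} then gives ${}^{\partial}\varepsilon_{\Theta_{1}'}(z)={}^{\partial}\varepsilon_{\Theta_{2}'}(z)$, so the two original polynomials agree.

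The one case that escapes both reductions is $\mathcal{S}(\Theta_{i})=(1,1,1,1)$, the only non-prime orientable signed sequence with $e=4$ that carries no $0$ to peel off, and I expect this to be the main obstacle. Here the argument I would give is structural: since $(1,1,1,1)$ is absent from the list in Theorem~\ref{th-03}, neither $\Theta_{i}$ is prime, and because interlacement never crosses between the factors of a ribbon-join, the multiset $(1,1,1,1)$ can split into prime orientable blocks only as $(1,1)\vee(1,1)$ (a singleton block would be a loop and contribute a $0$, which is excluded). By Theorem~\ref{th-02} each $(1,1)$-block is the unique bouquet $(a,b,a,b)$, so both $\Theta_{1}$ and $\Theta_{2}$ are joins of two copies of it, and Lemma~\ref{lem-02}(1) forces their Euler-genus polynomials to coincide. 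Undoing the substitution $z\mapsto z^{1/2}$ then completes the proof. The delicacy in this last case is that one must simultaneously argue non-primality (from its absence in Theorem~\ref{th-03}) and the uniqueness of the prime decomposition at the level of signed sequences, rather than merely peeling off trivial loops as in every other case.
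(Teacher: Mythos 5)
Your proposal is correct and follows essentially the same route as the paper's proof: restrict to $e=4$, settle the prime case via Theorem~\ref{th-03}, peel off trivial loops with Corollary~\ref{pro-3} to reduce to Theorem~\ref{th-04}, and treat the signed sequence $(1,1,1,1)$ as the one remaining special case. If anything, your handling of $(1,1,1,1)$ is slightly more careful than the paper's, which simply asserts $\Theta_{1}=\Theta_{2}$ by pointing to a figure, whereas you derive the block decomposition $(1,1)\vee(1,1)$ and conclude by multiplicativity (Lemma~\ref{lem-02}), avoiding any appeal to uniqueness of the ribbon-join; likewise your upfront substitution $z\mapsto z^{1/2}$ cleanly justifies the paper's implicit use of Corollary~\ref{pro-3} for $^{\partial}\Gamma$.
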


\begin{proof}
If $\Theta_{1}$ and $\Theta_{2}$ are prime bouquets, then $\Theta_{1}=\Theta_{2}$ by Theorem \ref{th-03}. Therefore, $$^{\partial}\Gamma_{\Theta_{1}}(z)=~^{\partial}\Gamma_{\Theta_{2}}(z).$$
Otherwise, $\Theta_{1}$ and $\Theta_{2}$ are non-prime bouquets.
We give the proof only for the case $e(\Theta_{1})=e(\Theta_{2})=4$, the other cases have been discussed in Theorem \ref{th-04}.
Then there exist non-empty ribbon subgraphs $B_{1}, \cdots,  B_{k}$ of $\Theta_{1}$
such that $\Theta_{1}=B_{1}\vee\cdots \vee B_{k}$ where $B_{j}$ is a prime bouquet for any $1\leq j\leq k$.
There are two cases.

\noindent{\bf Case 1} If $k\geq3$, then there exist $e\in \Theta_{1}$ and $e'\in \Theta_{2}$ such that $\beta(e)=\beta(e')=0$.
Note that $\mathcal{S}(\Theta_{1}-e)=\mathcal{S}(\Theta_{2}-e')$  and
$^{\partial}\Gamma_{\Theta_{1}}(z)=2~^{\partial}\Gamma_{\Theta_{1}-e}(z),$
$^{\partial}\Gamma_{\Theta_{2}}(z)=2~^{\partial}\Gamma_{\Theta_{2}-e'}(z)$ by Corollary \ref{pro-3}.
Since $e(\Theta_{1}-e)=e(\Theta_{2}-e')=3,$ it follows that $^{\partial}\Gamma_{\Theta_{1}-e}(z)=~^{\partial}\Gamma_{\Theta_{2}-e'}(z)$ by Theorem \ref{th-04}.
Thus $^{\partial}\Gamma_{\Theta_{1}}(z)=~^{\partial}\Gamma_{\Theta_{2}}(z).$

\noindent{\bf Case 2} If $k=2$, then $\mathcal{S}(\Theta_{1})\in \{(1, 1, 1, 1), (0, 1, 1, 2), (0, 2, 2, 2)\}$.
If $\mathcal{S}(\Theta_{1})=\{1, 1, 1, 1\}$, it is evident that $\Theta_{1}=\Theta_{2}$ as shown in Figure \ref{f23}.
Therefore, $^{\partial}\Gamma_{\Theta_{1}}(z)=~^{\partial}\Gamma_{\Theta_{2}}(z).$
If $\mathcal{S}(\Theta_{1})=(0, 1, 1, 2)$ or $\mathcal{S}(\Theta_{1})=(0, 2, 2, 2)$, this follows by the same method as in Case 1.
\begin{figure}[!htbp]
\centering
\includegraphics[width=8cm]{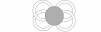}
\caption{A bouquet with signed sequence $(1, 1, 1, 1)$.}
\label{f23}
\end{figure}

\end{proof}

\begin{remark}
If $e(\Theta_{i})\geq 5$, Theorem \ref{th-05} is sometimes wrong. For example, let $$\Theta_{1}=(a, b, a, c, b, d, e, c, d, e),$$ and $$\Theta_{2}=(a, b, a, c, d, b, e, d, c, e).$$ Then  $\mathcal{S}({\Theta_{1}})=\mathcal{S}({\Theta_{2}})=(1, 2, 2, 2, 3),$ but $$^{\partial}\Gamma_{\Theta_{1}}(z)=12z+20z^{2},$$ $$^{\partial}\Gamma_{\Theta_{2}}(z)=2+14z+16z^{2}.$$
\end{remark}

By Theorems \ref{th-04} and \ref{th-05}, we give the
pDe-polynomial for an arbitrary prime ribbon graph $\Theta$ with $e(\Theta)\leq 3$
and give the pDg-polynomial for an arbitrary prime ribbon graph $\Theta'$ with $e(\Theta')\leq 4$ in terms of signed sequences of bouquets as shown in Table \ref{tab-2}.

The bouquet with signed sequence $(2, 2, 2)$ is a counterexample to Conjecture \ref{con-1}.

\begin{table}
\normalsize
\begin{center}
\begin{tabular}{|c|c|c|}
\hline
$\mathcal{S}(\Theta)$ & $^{\partial}\varepsilon_{\Theta}(z)$   & $^{\partial}\Gamma_{\Theta}(z)$\\ \hline
(0)                                       & $2$               & $2$          \\ \hline
(-0)                                      & $2z$              & $\diagup$    \\ \hline
(1, 1)                                    & $2+2z^{2}$        &$2+2z$        \\ \hline
(-1, 1), (-1, -1)                       & $2z+2z^{2}$       & $\diagup$    \\ \hline
(1, 1, 2)                                 & $2+6z^{2}$        & $2+6z$       \\ \hline
(2, 2, 2)                                 & $8z^{2}$          &$8z$          \\ \hline
(-1, 1, 2), (-2, -1, 1), (-2, -2, 2)  & $2z+2z^{2}+4z^{3}$& $\diagup$    \\ \hline
(-2, 1, 1), (-2, -2, -2)                & $2z+6z^{2}$       & $\diagup$    \\ \hline
(-1, -1, 2), (-2, -1, -1), (-2, 2, 2) &$4z^{2}+4z^{3}$    & $\diagup$    \\ \hline
(1, 1, 1, 3)                              & $2+14z^{2}$       & $2+14z$      \\ \hline
(1, 1, 2, 2), (2, 2, 2, 2)             &$2+10z^{2}+4z^{4}$ &$2+10z+4z^{2}$\\ \hline
(1, 2, 2, 3), (2, 2, 3, 3)              &$12z^{2}+4z^{4}$   & $12z+4z^{2}$ \\ \hline
(3, 3, 3, 3)                              &$8z^{2}+8z^{4}$    & $8z+8z^{2}$  \\
\hline

\end{tabular}
\end{center}
\caption{The partial-dual genus polynomials for prime bouquets $\Theta$ with $e(\Theta)\leq 4$.}
\label{tab-2}
\end{table}

We now present two small examples.

\begin{example}
Let $\Theta$ be a bouquet with signed rotation $$(h, a, b, c, d, c, a, d, b, h, i, e, f, -e, g, -f, g, -i).$$
Since $\beta(h)=0, \beta(i)=-0$, by Corollary \ref{pro-3}$$^{\partial}\varepsilon_{\Theta}(z)=2^{2}z~^{\partial}\varepsilon_{\Theta-\{h, i\}}(z).$$
Moreover, $\Theta-\{h, i\}=(a, b, c, d, c, a, d, b, e, f, -e, g, -f, g)=\Theta_{1}\vee\Theta_{2}$, where
$\Theta_{1}=(a, b, c, d, c, a, d, b), \Theta_{2}=(e, f, -e, g, -f, g)$. Since $\mathcal{S}(\Theta_{1})=(1, 1, 2, 2)$,
$\mathcal{S}(\Theta_{2})=(-2, -1, 1)$, it follows that $^{\partial}\varepsilon_{\Theta_{1}}(z)=2+10z^{2}+4z^{4}$,
$^{\partial}\varepsilon_{\Theta_{2}}(z)=2z+2z^{2}+4z^{3}$ by Table \ref{tab-2}. Then
\begin{eqnarray*}
^{\partial}\varepsilon_{\Theta}(z)&=& 2^{2}z~^{\partial}\varepsilon_{\Theta-\{h, i\}}(z)=2^{2}z~^{\partial}\varepsilon_{\Theta_{1}}(z)~^{\partial}\varepsilon_{\Theta_{2}}(z)\\
&=& 2^{2}z(2+10z^{2}+4z^{4})(2z+2z^{2}+4z^{3})\\
&=& 16z^{2}+16z^{3}+112z^{4}+80z^{5}+192z^{6}+32z^{7}+64z^{8}.
\end{eqnarray*}
\end{example}
\begin{example}
Let $G$ be a ribbon graph as shown in Figure \ref{f24} and let $A=\{a, b, c, d, e\}$. Then $G^{A}$ is a bouquet.
An easy computation shows that the signed rotation of $G^{A}$ is $$(a, c, h, c, b, h, b, a, d, g, e, f, e, d, g, f).$$
Since $\beta(a)=0$, it follows that
$$^{\partial}\Gamma_{G^{A}}(z)=2~^{\partial}\Gamma_{G^{A}-a}(z).$$
Furthermore, $G^{A}-a=(c, h, c, b, h, b, d, g, e, f, e, d, g, f)=\Theta_{1}\vee\Theta_{2}$, where
$\Theta_{1}=(c, h, c, b, h, b), \Theta_{2}=(d, g, e, f, e, d, g, f)$.
Since $\mathcal{S}(\Theta_{1})=(1, 1, 2)$,
$\mathcal{S}(\Theta_{2})=(1, 2, 2, 3)$, we have $^{\partial}\Gamma_{\Theta_{1}}(z)=2+6z$,
$^{\partial}\Gamma_{\Theta_{2}}(z)=12z+4z^{2}$ by Table \ref{tab-2}. Therefore,
\begin{eqnarray*}
^{\partial}\Gamma_{G}(z)&=& ^{\partial}\Gamma_{G^{A}}(z)=2~^{\partial}\Gamma_{G^{A}-a}(z)=2~^{\partial}\Gamma_{\Theta_{1}}(z)~^{\partial}\Gamma_{\Theta_{2}}(z)\\
&=& 2(2+6z)(12z+4z^{2})=48z+160z^{2}+48z^{3}.
\end{eqnarray*}
\begin{figure}[!htbp]
\centering
\includegraphics[width=10cm]{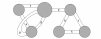}
\caption{A ribbon graph $G$.}
\label{f24}
\end{figure}
\end{example}

\begin{remark}
Note that for any subset $A\subseteq E(G)$, $G^{A}$ and $G^{A^{c}}$ are geometric duals, having the same Euler genus or orientable genus. Thus each term of the partial dual genus polynomial of a non-empty ribbon graph has an even coefficient.
\end{remark}

\section{An infinite family of counterexamples}

In this section, we further give an infinite family of counterexamples to Conjecture \ref{con-1}.

\begin{lemma}\label{lem-03}
Let $\Theta_{t}$ be a bouquet with the signed rotation $$(1, 2, 3, \cdots, t, 1, 2, 3, \cdots, t).$$ Then
\begin{eqnarray*}
\gamma(\Theta_{t})=\left\{\begin{array}{ll}
                    \frac{1}{2}(t-1), & \mbox{if}~t~\mbox{is odd,}\\
                    \frac{1}{2}t, & \mbox{if}~t~\mbox{is even.}
                   \end{array}\right.
\end{eqnarray*}
\end{lemma}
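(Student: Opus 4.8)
The plan is to reduce the computation of $\gamma(\Theta_t)$ to counting the boundary components of $\Theta_t$, and then to count those boundary components by recognizing the boundary-tracing permutation as a single cyclic shift. First I would record the structural facts read off from the signed rotation $(1,2,\dots,t,1,2,\dots,t)$: every label occurs twice with the same (omitted) sign $+$, so every edge is an untwisted loop; hence $\Theta_t$ is orientable and its orientable genus is half of its Euler genus. As $\Theta_t$ is a bouquet, it has one vertex and $t$ edges, so capping the boundary components and computing the Euler characteristic gives $1 - t + f = 2 - 2\gamma(\Theta_t)$, that is
\[
\gamma(\Theta_t) = \frac{1 + t - f}{2},
\]
where $f$ is the number of boundary components of $\Theta_t$. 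Everything therefore reduces to showing that $f = 2$ when $t$ is odd and $f = 1$ when $t$ is even.

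Next I would set up the tracing permutation on the $2t$ half-edge occurrences. Indexing the positions of the rotation by $1,\dots,2t$, edge $k$ occupies positions $k$ and $k+t$. Let $\sigma$ be the cyclic successor $i \mapsto i+1 \pmod{2t}$ induced by the vertex rotation, and let $\alpha$ be the fixed-point-free involution $i \mapsto i+t \pmod{2t}$ pairing the two occurrences of each untwisted edge. For an orientable bouquet the boundary components are exactly the cycles of the composite $\phi = \sigma\alpha$, and the crucial simplification here is that this composite collapses to a pure shift:
\[
\phi(i) = \sigma(\alpha(i)) = (i+t)+1 = i + (t+1) \pmod{2t}.
\]

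Finally I would count the cycles of the shift $i \mapsto i + (t+1)$ on $\mathbb{Z}_{2t}$. A shift by $c$ on $\mathbb{Z}_n$ has exactly $\gcd(c,n)$ cycles, so $f = \gcd(t+1,\,2t)$. Reducing, $\gcd(t+1,2t) = \gcd\bigl(t+1,\,2t-2(t+1)\bigr) = \gcd(t+1,2)$, which equals $2$ when $t$ is odd and $1$ when $t$ is even. Substituting into the displayed genus formula yields $\gamma(\Theta_t) = (t-1)/2$ for odd $t$ and $\gamma(\Theta_t) = t/2$ for even $t$, as claimed.

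I expect the only delicate point to be the bookkeeping in the second paragraph: justifying that untwisted loops make $\alpha$ the plain pairing $i \mapsto i+t$ with no sign reversal, and fixing the convention so that boundary components are read off as the cycles of $\sigma\alpha$. Once $\phi$ is identified with the clean shift $i \mapsto i+(t+1)$, the residual $\gcd$ computation is routine. As a cross-check I would verify the cases $t=1,2,3,4$ directly, where $\phi$ has $2,1,2,1$ cycles respectively, matching the parity pattern.
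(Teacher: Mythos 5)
Your proof is correct, but it takes a genuinely different route from the paper's. The paper argues by reduction: using a pictorial identity it shows $f(1,\dots,t,1,\dots,t)=f(1,\dots,t-2,1,\dots,t-2)$, i.e.\ the last two loops can be removed without changing the number of faces, and then iterates down to the base cases $f(1,1)=2$ ($t$ odd) and $f(1,2,1,2)=1$ ($t$ even). You instead compute the face count in closed form from the rotation system: with $\sigma(i)=i+1$ and $\alpha(i)=i+t$ on $\mathbb{Z}_{2t}$, the face-tracing permutation collapses to the shift $i\mapsto i+(t+1)$, so $f=\gcd(t+1,2t)=\gcd(t+1,2)$, which is $2$ or $1$ according to the parity of $t$; the Euler-characteristic relation $1-t+f=2-2\gamma$ then gives the claim. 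The two delicate points you flag are indeed the only ones needing care, and both are standard: all loops are untwisted, so the bouquet is orientable and is fully described by the rotation system with the plain pairing $\alpha$; and faces are the orbits of $\sigma\alpha$ (any of the usual conventions, e.g.\ $\sigma^{-1}\alpha$, gives the same cycle count here, since it corresponds to the mirror map). Your approach buys a one-shot, figure-free computation that also exhibits the exact face structure for every $t$; the paper's induction is more elementary and visual but relies on a picture to justify the two-edge deletion step. Both yield $f=2$ for $t$ odd and $f=1$ for $t$ even, hence $\gamma(\Theta_t)=\frac{1}{2}(t-1)$ or $\frac{1}{2}t$ respectively.
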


\begin{proof}
The result is easily verified when $t=1, 2$, Now let $t\geq 3$. By Figure \ref{f25}, we have
\begin{figure}[htbp]
\begin{center}
\includegraphics[width=13cm]{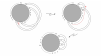}
\caption{Proof of Lemma \ref{lem-03}.}
\label{f25}
\end{center}
\end{figure}
\begin{eqnarray*}
&&f(1, 2, 3, \cdots,t-1, t, 1, 2, 3, \cdots, t-1, t)\\
&=& f(1, \cdots, t-2, 1, \cdots, t-2, t-1, t, t-1, t)\\
&=& f(1, \cdots, t-2, 1, \cdots, t-2).
\end{eqnarray*}
In the same manner we can see that if $t$ is odd, then
\begin{eqnarray*}
f(1, 2, 3, \cdots, t, 1, 2, 3, \cdots, t)&=&f(1, \cdots, t-2, 1, \cdots, t-2)\\
&=& \cdots=f(1, 1)=2.
\end{eqnarray*}
It is easy to check that $$\gamma(1, 2, 3, \cdots, t, 1, 2, 3, \cdots, t)=\frac{1}{2}(t-1).$$
If $t$ is even, then
\begin{eqnarray*}
f(1, 2, 3, \cdots, t, 1, 2, 3, \cdots, t)&=&f(1, \cdots, t-2, 1, \cdots, t-2)\\
&=& \cdots=f(1, 2, 1, 2)=1.
\end{eqnarray*}
Thus, $$\gamma(1, 2, 3, \cdots, t, 1, 2, 3, \cdots, t)=\frac{1}{2}t.$$
\end{proof}

\begin{lemma}\cite{GMT} \label{lem-01}
Let $G$ be a bouquet and $A\subseteq E(G)$. Then
\begin{enumerate}
\item[(1)] $\varepsilon(G^{A})=\varepsilon(A)+\varepsilon(A^{c})$.
\item[(2)] If $G$ is orientable, then $\gamma(G^{A})=\gamma(A)+\gamma(A^{c})$.
\end{enumerate}
\end{lemma}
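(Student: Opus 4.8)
The plan is to derive both identities from Euler's formula together with two structural bijections for partial duals of bouquets. Write $G_A$ for the spanning sub-bouquet $(V(G),A)$, so that $\varepsilon(A)=\varepsilon(G_A)$ and $\gamma(A)=\gamma(G_A)$, and note that each of $G_A$, $G_{A^c}$ has a single vertex. Since partial duality preserves the number of connected components, $G^A$ is a connected ribbon graph, so Euler's formula gives $\varepsilon(G^A)=2-v(G^A)+|E(G)|-f(G^A)$, while the one-vertex graphs satisfy $\varepsilon(A)=1+|A|-f(G_A)$ and $\varepsilon(A^c)=1+|A^c|-f(G_{A^c})$. Substituting $|E(G)|=|A|+|A^c|$, identity (1) becomes equivalent to the single combinatorial equation $v(G^A)+f(G^A)=f(G_A)+f(G_{A^c})$; in fact the two sides match term by term once the two counts below are known.

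First I would establish the two halves of this equation separately, namely $v(G^A)=f(G_A)$ and $f(G^A)=f(G_{A^c})$. The first is the heart of the argument: the vertices of a partial dual $G^A$ are in natural bijection with the boundary components of the spanning subgraph on the dualized edges $A$. I would prove this from the band-decomposition (arrow-presentation) description of partial duality, in which dualizing the edges of $A$ converts each boundary component of $G_A$ into a vertex of $G^A$, while the edges of $A^c$ remain attached along the way they met the old boundary. The second half then follows formally: since $f(G^A)=v((G^A)^\ast)$ and $(G^A)^\ast=G^{A^c}$, applying the first half to $A^c$ gives $f(G^A)=v(G^{A^c})=f(G_{A^c})$. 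I expect the verification of $v(G^A)=f(G_A)$ to be the main obstacle, since it requires unwinding the definition of partial duality rather than a formal manipulation; everything else is bookkeeping. As a sanity check one can test $A=\emptyset$ (where $G^A=G$) and $A=E(G)$ (where $G^A=G^\ast$, so $v(G^\ast)=f(G)$ and $f(G^\ast)=v(G)=1$), both of which the two bijections reproduce.

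Adding the two bijections gives $v(G^A)+f(G^A)=f(G_A)+f(G_{A^c})$, which by the reduction above is exactly identity (1). For identity (2) I would use that orientability is preserved under partial duality and under edge deletion: if $G$ is orientable then so are $G^A$, $G_A$ and $G_{A^c}$, and for an orientable ribbon graph one has $\varepsilon=2\gamma$. This is precisely the observation recorded just after Definition \ref{def-1}, that ${}^{\partial}\Gamma_{G}(z)={}^{\partial}\varepsilon_{G}(z^{1/2})$ for orientable $G$. Dividing identity (1) by $2$ then yields $\gamma(G^A)=\gamma(A)+\gamma(A^c)$, completing the proof.
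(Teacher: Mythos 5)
Your proposal is for a statement the paper itself never proves: Lemma \ref{lem-01} is quoted from Gross--Mansour--Tucker \cite{GMT} and used as a black box, so there is no in-paper argument to compare against. Judged on its own merits, your proof is correct and is essentially the standard derivation. The Euler-formula reduction of (1) to $v(G^{A})+f(G^{A})=f(G_{A})+f(G_{A^{c}})$ is right (connectedness of $G^{A}$ follows because partial duality preserves the number of components and a bouquet is connected), and the two counting facts you rely on are genuine properties of partial duality: the vertices of $G^{A}$ correspond to the boundary components of the spanning ribbon subgraph $(V(G),A)$ --- in Chmutov's formulation this is essentially the \emph{definition} of $G^{A}$, so your plan to ``unwind the definition'' is exactly the right amount of work --- and $f(G^{A})=v\bigl((G^{A})^{*}\bigr)=v(G^{A^{c}})=f(G_{A^{c}})$ follows formally from $(G^{A})^{B}=G^{A\triangle B}$ with $B=E(G)$. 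It is also worth noting that your bookkeeping makes visible exactly where the bouquet hypothesis is needed: for a general ribbon graph $\varepsilon(A)=2c(A)-v(G)+|A|-f(G_{A})$, and the two sides of (1) match only when $c(A)+c(A^{c})=v(G)+1$, which is automatic precisely when $v(G)=1$. The one blemish is in part (2): citing the remark after Definition \ref{def-1} is mildly circular, since the identity ${}^{\partial}\Gamma_{G}(z)={}^{\partial}\varepsilon_{G}(z^{1/2})$ already presupposes the fact you actually need, namely that partial duals (and ribbon subgraphs) of orientable ribbon graphs are orientable; cite that standard preservation property directly, and then dividing (1) by two gives (2) as you say.
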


\begin{theorem}\label{th-01}
\begin{eqnarray*}
^{\partial}\Gamma_{\Theta_{t}}(z)=\left\{\begin{array}{ll}
                    2^{t}z^{\frac{1}{2}(t-1)}, & \mbox{if}~t~\mbox{is odd,}\\
                    2^{t-1}z^{\frac{1}{2}t}+2^{t-1}z^{\frac{1}{2}(t-2)}, & \mbox{if}~t~\mbox{is even.}
                   \end{array}\right.
\end{eqnarray*}
\end{theorem}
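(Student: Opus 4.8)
The plan is to reduce the sum over all $2^{t}$ partial duals to a single sum indexed by the size $k=|A|$ of the chosen edge subset, using two structural inputs. First I would note that in the rotation $(1,2,\dots,t,1,2,\dots,t)$ every loop carries two $+$ half-edges, so each loop is untwisted and $\Theta_{t}$ is orientable; hence $^{\partial}\Gamma_{\Theta_{t}}(z)$ is defined. The decisive combinatorial observation is that the sub-bouquet $\Theta_{t}|_{A}$ induced on any edge set $A$ with $|A|=k$ is isomorphic to $\Theta_{k}$: deleting the symbols not in $A$ from the rotation leaves the retained symbols in their original cyclic order, so after relabeling one obtains exactly $(1,\dots,k,1,\dots,k)$. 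Consequently the retained loops stay untwisted and pairwise interlaced, Lemma \ref{lem-03} applies verbatim, and $\gamma(A)=\gamma(\Theta_{k})$ depends only on $k$. The two cases of Lemma \ref{lem-03} merge into the single formula $\gamma(\Theta_{k})=\lfloor k/2\rfloor$ (with $\gamma(\emptyset)=\lfloor 0/2\rfloor=0$), which I would record for convenience.

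Next I would invoke Lemma \ref{lem-01}(2), which for a bouquet gives $\gamma(\Theta_{t}^{A})=\gamma(A)+\gamma(A^{c})$. Since $|A^{c}|=t-k$, this yields $\gamma(\Theta_{t}^{A})=\lfloor k/2\rfloor+\lfloor (t-k)/2\rfloor$, a value depending only on $k$. Grouping the $\binom{t}{k}$ subsets of each fixed size, Definition \ref{def-1} then gives
$$^{\partial}\Gamma_{\Theta_{t}}(z)=\sum_{k=0}^{t}\binom{t}{k}\,z^{\lfloor k/2\rfloor+\lfloor (t-k)/2\rfloor}.$$

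Finally I would simplify the exponent by a parity analysis. When $t$ is odd, exactly one of $k,\,t-k$ is odd for every $k$, so $\lfloor k/2\rfloor+\lfloor (t-k)/2\rfloor=(t-1)/2$ identically, and the sum collapses via $\sum_{k}\binom{t}{k}=2^{t}$ to $2^{t}z^{(t-1)/2}$. When $t$ is even, $k$ and $t-k$ share parity: the exponent equals $t/2$ when $k$ is even and $(t-2)/2$ when $k$ is odd. Using $\sum_{k\text{ even}}\binom{t}{k}=\sum_{k\text{ odd}}\binom{t}{k}=2^{t-1}$ then gives $2^{t-1}z^{t/2}+2^{t-1}z^{(t-2)/2}$. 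Both outcomes coincide with the claimed formula.

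I expect the only genuine step to be the isomorphism $\Theta_{t}|_{A}\cong\Theta_{k}$; everything afterward is bookkeeping with binomial coefficients. The point requiring care is that the restriction of a signed rotation to a subset of edges really is the signed rotation of the induced sub-bouquet, and that all retained loops remain untwisted and pairwise interlaced so that the genus formula of Lemma \ref{lem-03} transfers without change.
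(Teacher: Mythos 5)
Your proof is correct and follows essentially the same route as the paper: both apply Lemma \ref{lem-03} to identify the genus of each induced sub-bouquet, invoke the additivity $\gamma(\Theta_t^A)=\gamma(A)+\gamma(A^c)$ from Lemma \ref{lem-01}, and finish with the same parity analysis of $|A|$ and $|A^c|$. The only difference is that you make explicit (and rightly flag as the key point) the isomorphism $\Theta_t|_A\cong\Theta_{|A|}$, which the paper's proof uses implicitly without comment.
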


\begin{proof}
For any subset $A\subseteq E(\Theta_{t})$, it is easy to calculate the genus of $\Theta_{t}^{A}$  by Lemmas \ref{lem-03} and \ref{lem-01}.

If $t$ is odd, then $|A|$ and $|A^{c}|$ are exactly one odd and one even. Therefore
$$\gamma(\Theta_{t}^{A})=\gamma(A)+\gamma(A^{c})=\frac{1}{2}(t-1).$$ Hence $$^{\partial}\Gamma_{\Theta_{t}}(z)=2^{t}z^{\frac{1}{2}(t-1)}.$$

If $t$ is even, then $|A|$ and $|A^{c}|$ are both odd or both even. It follows that
\begin{eqnarray*}
\gamma(\Theta_{t}^{A})=\gamma(A)+\gamma(A^{c})=\left\{\begin{array}{ll}
                    \frac{1}{2}(t-2), & \mbox{if}~|A|~\mbox{is odd,}\\
                    \frac{1}{2}t, & \mbox{if}~|A|~\mbox{is even.}
                   \end{array}\right.
\end{eqnarray*}
Thus $$^{\partial}\Gamma_{\Theta_{t}}(z)=2^{t-1}z^{\frac{1}{2}t}+2^{t-1}z^{\frac{1}{2}(t-2)}.$$
\end{proof}

In the cast $t=3$, it is exactly the bouquet with signed sequence $(2,2,2)$.


\end{document}